\documentclass[12pt]{amsart}
\usepackage{graphicx} % Required for inserting images

\usepackage{soul} % \st for striking out
\usepackage{amscd,amssymb,mathtools,aliascnt}
\usepackage[dvipsnames]{xcolor}
\usepackage{graphicx}
\usepackage{calrsfs}
\usepackage[labelfont=rm]{subcaption}
\usepackage{tikz-cd} 
\usepackage{tikz}
\usetikzlibrary{decorations.markings}
\usetikzlibrary{decorations.pathreplacing}
\usetikzlibrary{shapes}
\usetikzlibrary{backgrounds}
\usetikzlibrary{calc}
\usetikzlibrary{shapes.misc, positioning}
\usepackage{mdwlist}
\usepackage{xfrac}
\DeclareCaptionSubType*{figure}

\usepackage[arrow,matrix,graph,frame,poly,arc,tips]{xy}

\usepackage{multirow}
\usepackage{enumerate}
\usepackage{verbatim}
\usepackage{comment}
\usepackage{todonotes}

\DeclarePairedDelimiter{\form}{\langle}{\rangle}

    \DeclarePairedDelimiter{\abss}{\lVert}{\rVert}

    \newcommand\bE{\mathbb{E}}
    
    \newcommand\N{\mathbb{N}}
    \newcommand\bR{\mathbb{R}}

    \newcommand\Z{\mathbb{Z}}

    \newcommand\CC{\mathcal{C}}

    \newcommand\II{\mathcal{I}}

    \newcommand\HH{\mathcal{H}}
    \newcommand\PP{\mathcal{P}}

    \newcommand\Sym{\operatorname{Sym}}

    \newcommand\Id{\operatorname{Id}}

    \newcommand\sse{\subseteq}

    \def\thetitle{A note on the proof of nonamenability of $F$}
    \def\theauthors{{Sang-hyun Kim, Yash Lodha}}
    \usepackage{hyperref}
    \hypersetup{
      colorlinks=false,
      plainpages,
      urlcolor=black,
      linkcolor=black,
      pdftitle=   \thetitle,
      pdfauthor=  {\theauthors}
    }
    \usepackage[capitalise]{cleveref}

    \theoremstyle{plain}
    
    \newtheorem{thm}{Theorem}[section]
    \crefname{thm}{Theorem}{Theorems}
    \newaliascnt{lem}{thm}
    \newtheorem{lem}[lem]{Lemma}
    \aliascntresetthe{lem}
    \crefname{lem}{Lemma}{Lemmas}
    \newaliascnt{cor}{thm}
    
    \aliascntresetthe{cor}
    \crefname{cor}{Corollary}{Corollaries}
    \newaliascnt{prop}{thm}
    \newtheorem{prop}[prop]{Proposition}
    \aliascntresetthe{prop}
    \crefname{prop}{Proposition}{Propositions}
    \newaliascnt{con}{thm}
    
    \aliascntresetthe{con}
    \crefname{con}{Conjecture}{Conjectures}
    \newaliascnt{que}{thm}
    
    \aliascntresetthe{que}
    \crefname{que}{Question}{Questions}
    \newaliascnt{obs}{thm}
    
    \aliascntresetthe{obs}
    \crefname{obs}{Observation}{Observations}
    \newaliascnt{hyp}{thm}
    
    \aliascntresetthe{hyp}
    \crefname{hyp}{Hypothesis}{Hypotheses}
    \newtheorem*{claim*}{Claim}
    
    \newtheorem{claim}{Claim}
    \crefname{claim}{Claim}{Claim}

    \theoremstyle{remark}
    \newaliascnt{exmp}{thm}
    
    \aliascntresetthe{exmp}
    \crefname{exmp}{Example}{Examples}
    \newaliascnt{rem}{thm}
    
    \aliascntresetthe{rem}
    \crefname{rem}{Remark}{Remarks}

    \theoremstyle{definition}
    \newaliascnt{defn}{thm}
    
    \aliascntresetthe{defn}
    \crefname{defn}{Definition}{Definitions}
    
    \newaliascnt{setting}{thm}
    
    \aliascntresetthe{setting}
    \crefname{setting}{Setting}{Settings}
    \keywords{one-dimensional group action, piecewise linear homeomorphism}
\subjclass[2020]{Primary: 57M60; Secondary: 37C35, 37C85}

\begin{document}
    \title\thetitle
    \date{\today}
%\keywords{Unit fraction, Ahmes series, Double exponential growth}
%\subjclass[2020]{Primary: 11J72; Secondary: 11D68, 40A05}

     %\keywords{ }
    %\subjclass[2020]{Primary:  20A15, 57S05; Secondary: 03C07, 57S25, 57R50, 57R55,   57R60}

    \author[S. Kim]{Sang-hyun Kim}
    \address{School of Mathematics, Korea Institute for Advanced Study (KIAS), Seoul, 02455, Korea}
    \email{skim.math@gmail.com}
    \urladdr{https://kimsh.kr}

  \author[Y. Lodha]{Yash Lodha}
    \address{Department of Mathematics, Purdue University, West Lafayatte,IN, USA}
    \email{ylodha@purdue.edu}
    \urladdr{https://yl7639.wixsite.com/website}
    
\begin{abstract}
The goal of this note is to provide a human written exposition of OpenAI's recent proof that Thompson's group $F$ is nonamenable.
\end{abstract}
\maketitle

    %%%%%%%%%%%%%%%%%%%%%%%%%%%
    % START of body
    %%%%%%%%%%%%%%%%%%%%%%%%%%%
Recall a group is \emph{amenable} if it admits an \emph{invariant mean} (that is, left-invariant finitely-additive probability measure) on all of its subsets. 
The group $\textup{PL}^+[0,1]$ is the group of orientation preserving, piecewise linear homeomorphisms of $[0,1]$ (with finitely many breakpoints allowed for each element).
Thompson's group $F$ is defined as a subgroup of this group consisting of elements $f$ whose breakpoints lie in dyadic rationals $\mathbb{Z}[\frac{1}{2}]$ and whose slopes, whenever they exist, lie in $2^{\mathbb{Z}}$.
It has been a long standing open problem whether $F$ is amenable. 
The problem was first considered by Richard Thompson \cite{Thompson1973}, popularized by Ross Geoghegan as a central part of his conjecture \cite{CannonFloydParry1996, Zaremsky2016, Lodha2020}, and to our knowledge \cite{GerstenStallings1987} contains the first written appearance.

Recently, OpenAI announced a solution. Using the same idea, we give a streamlined proof, and describe a generalization in Theorem \ref{thm:main2}. 
All the arguments here are based on simplifications of those in the OpenAI annoucement, and no novelty is claimed here. 
No part of this writing, including the references, was produced by AI.

%\sam{cited open ai}
\begin{thm}[\cite{oai-thompson}]\label{thm:main}
Thompson's group $F$ is nonamenable.
\end{thm}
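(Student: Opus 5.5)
The plan is to argue by contradiction through an equivariant push-forward. Suppose $m$ is a left-invariant mean on $F$. For any $F$-set $X$ and any $F$-equivariant map $\varphi\colon F\to X$, the push-forward $\varphi_*m$ is an $F$-invariant mean on $X$. A typical example is $\varphi(f)=f\cdot\xi$ for a fixed point $\xi$ of some space on which $F$ acts. So it suffices to find a single $F$-set $X$ whose action admits \emph{no} invariant mean, that is, a nonamenable action of $F$. Here $X$ is a transitive space $F/H$, with $H$ the stabilizer of a well-chosen piece of data.

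The naive choice does not work. Taking $X$ to be the orbit of a dyadic point, with $\varphi(f)=f(x_0)$, gives the action on $\mathbb{Z}[\tfrac12]\cap(0,1)$, and this action is known to be amenable because its Schreier graph is amenable. So $X$ must remember more. I would take $X$ to be an orbit of finite configurations enriched by germ data. A natural candidate is the orbit of a pair $\bigl(f(x_0),\,f'(x_0^{\pm})\bigr)$, or more generally of the tuple of germs of $f$ at finitely many chosen dyadic points. Correspondingly, $H$ is the subgroup of elements fixing these points with prescribed germs.

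The key steps, in order, are as follows.
\begin{enumerate}
\item Fix the enriched space $X$ and describe the Schreier graph of $F$ acting on $X$ with respect to the standard generators $x_0,x_1$.
\item Find finitely many elements $g_1,\dots,g_k\in F$ and a partition $X=A_1\sqcup\dots\sqcup A_k\sqcup B_1\sqcup\dots\sqcup B_l$ giving a paradoxical decomposition. Concretely, the sets $g_iA_i$ should already tile $X$, and so should the sets $h_jB_j$. Equivalently, I could prove a Følner-failure inequality: there is a uniform $\varepsilon>0$ such that $|\partial S|\ge\varepsilon|S|$ for every finite $S\subset X$.
\item Conclude that $\varphi_*m$ cannot exist, which contradicts the existence of $m$.
\end{enumerate}

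To build the decomposition in step 2, I would exploit local contraction and expansion near the chosen points. Near a fixed point, an element of $F$ with slope $2$ on one side and slope $\tfrac12$ on the other acts on germs like a ping-pong pair. The dyadic self-similarity then lets one copy the configuration into disjoint subintervals, and this gives the doubling needed for paradoxicality.

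I expect step 2 to be the main obstacle. Since $F$ contains no free subgroups, no ping-pong in $F$ itself can work. The paradoxicality has to live in the combinatorics of the quotient $F/H$, where $H$ is very large: it contains copies of $F\times F$ supported away from the chosen points. The difficulty is to make $H$ large enough that the geometry of $F/H$ is tree-like, yet not so large that $F/H$ collapses to an amenable orbit like the dyadics.

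My first attempt would be to record the germs at both endpoints of a standard dyadic interval together with its position. I would then try to show that the resulting Schreier graph quasi-isometrically contains a regular tree of degree at least $3$, via an explicit injective ``doubling'' map on $X$ built from two elements with disjoint supports. If that fails, I would instead try a weighted counting argument: a function on $X$ whose translates under $x_0^{\pm1}$ and $x_1^{\pm1}$ average to a strictly smaller quantity, which witnesses spectral gap for the random walk on $X$.
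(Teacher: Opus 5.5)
Your reduction is sound: if $F$ were amenable, every $F$-set would carry an invariant mean, so a single nonamenable $F$-set would suffice. But step 2 is the entire proof and you do not supply it. More seriously, it cannot be supplied for the spaces you propose, because they are amenable $F$-sets.

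Let $q_1<\dots<q_n$ be dyadic points of $(0,1)$, let $K\le F$ be their pointwise stabilizer, and let $H\le K$ consist of the elements that are the identity near every $q_i$. This $H$ is exactly the stabilizer of your germ data: for $n=1$ it stabilizes the triple $(f(q_1),f'(q_1^-),f'(q_1^+))$, and for $n=2$ it stabilizes a dyadic interval together with the germs at its endpoints. The set $F/K$ is the set of increasing $n$-tuples of dyadics. With respect to the standard generators, $x_1$ is the identity on $[0,\tfrac12]$ and $x_0(t)=t/2$ there. Hence the sets
\[
S_N=\bigl\{(2^{-k_1},\dots,2^{-k_n}) : N\ge k_1>\dots>k_n\ge 2\bigr\}
\]
are fixed pointwise by $x_1^{\pm1}$, while $x_0^{\pm1}$ sends each $k_i$ to $k_i\pm1$. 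This gives $|x_0^{\pm1}S_N\triangle S_N|/|S_N|=2n/(N-1)\to 0$, so $K$ is co-amenable in $F$. Moreover, $H$ is the kernel of the germ homomorphism $K\to\mathbb{Z}^{2n}$ that records the left and right $\log_2$-slopes at the $q_i$, so $H$ is co-amenable in $K$. By transitivity of co-amenability, $F/H$ carries an $F$-invariant mean, and so does $F/H'$ for every $H'\supseteq H$.

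Consequently, none of these spaces admits a paradoxical decomposition, an inequality $|\partial S|\ge\varepsilon|S|$, or a spectral gap, whatever germ data you record at finitely many dyadic points. A quasi-isometrically embedded regular tree in the Schreier graph would not help either: amenable graphs such as the Diestel--Leader graph of the lamplighter group contain isometric copies of the $3$-regular tree. Even $\mathcal{P}_{\mathrm{fin}}(D)$ is an amenable $F$-set, since the F\o lner sets above give invariant means supported on sets of a fixed size.

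The missing idea, which the paper uses, is to \emph{amplify} amenability instead of looking for a paradoxical $F$-set. By Juschenko--Monod, if $F$ were amenable, its (amenable) action on $D$ would be extensively amenable. That is, there would be an $F$-invariant mean $\mu$ on $\mathcal{P}_{\mathrm{fin}}(D)$ giving full measure to $\{P : P\supseteq M\}$ for every finite $M$. This mean sees arbitrarily large configurations, which is exactly what fails for bounded configurations. It lets the paper split $P$ self-similarly into $P_0,P_1$ through the affine charts $s_0,s_1$, and define recursively $c(P)=f\bigl(\tfrac{c(P_0)+c(P_1)}{2}\bigr)$. Here $f$ is a Benyamini--Sternfeld map of the Hilbert ball, with Lipschitz constant $L<\sqrt2$ and displacement at least $\delta>0$.

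Invariance of $\mu$ under elements of $F$ carrying $s_u,s_v$ to $s_0,s_1$ makes all the quantities $\mathbb{E}\|Y_u-Y_v\|^2$ ($u<v$) equal to a common value $q$. The Apollonius identity then gives $\mathbb{E}\|Z_0-Z_1\|^2=q/2$. Hence $q\le L^2q/2$ forces $q=0$, which yields $\mathbb{E}\|Y_1-Z_1\|^2=0$. This contradicts $\|f(x)-x\|\ge\delta$.
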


\section{Preliminaries}
%\sam{authors in $=>$ authors of}
%We will need a few elementary facts about the group $F$, as well as the notion of extensive amenability due to the authors of~\cite{JM2013}.
An interval of the form $[\frac{k}{2^n}, \frac{(k+1)}{2^n}]\subset (0,1)$ is called an \emph{internal regular dyadic interval}.
%An \emph{internal} regular dyadic interval is one which is a subset of $(0,1)$. 
We denote the set of such intervals by $\II$.
We fix $D=\Z[\frac{1}{2}]\cap [0,1]$. The following is a well-known and elementary fact.

%\sam{Changed (1) to (i): I usually use (1), (2) for independent statements, while (i) and (ii) for conditions of a single statement}
\begin{lem}\label{lem:transitive}
%For each $n\in \N$, the group $F$ acts transitively on the set $$\{(q_1,\ldots, q_n)\mid q_1<\ldots<q_n, \text{ where }q_i\in D\cap (0,1)\}$$
Given two pairs $(I_0,J_0), (I_1,J_1)\in \II^2$ such that $\sup(I_i)<\inf(J_i)$,  there exists an $h\in F$ such that: 
\begin{enumerate}[(i)]
\item $h(I_0)=I_1$ and $h(J_0)=J_1$.
\item The restrictions of $h$ to $I_0,J_0$ are affine.
\end{enumerate}
%the following hold:
%\begin{enumerate}
%\item For each pair $I,J\in \II$, there is an element $g\in F$ such that $g(I)=J$.
%\item Given two pairs $(I_0,J_0), (I_1,J_1)$ such that $I_i,J_i\in \II$ and $\sup(I_i)<\inf(J_i)$,  there exists an $h\in F$ such that $$h(I_0)=I_1\qquad h(J_0)=J_1$$
%and so that the restrictions of $h$ to $I_0,J_0$ are affine.
%\end{enumerate}
\end{lem}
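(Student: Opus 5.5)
The plan is to build $h$ piece by piece and check that it lies in $F$. The basic fact I will use is that for dyadic rationals $a<b$ and $c<d$ in $[0,1]$ there is a PL homeomorphism $[a,b]\to[c,d]$ with dyadic breakpoints and slopes in $2^{\Z}$. To prove this, subdivide $[a,b]$ and $[c,d]$ into standard dyadic subdivisions (consecutive regular dyadic intervals) with the \emph{same number} of pieces, and map the $i$-th piece affinely onto the $i$-th piece. Each such affine map sends a regular dyadic interval of length $2^{-m}$ onto one of length $2^{-n}$. Its slope is $2^{m-n}$, and it sends dyadic endpoints to dyadic endpoints, so it has the required form.

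To equalize the number of pieces, first write each of $[a,b]$ and $[c,d]$ as a finite union of regular dyadic intervals. This is possible since the endpoints are dyadic: take a common denominator $2^N$. Then note that splitting one regular dyadic interval into its two halves increases the count by one. So we can always refine the side with fewer pieces until the counts agree.

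Now write $I_i=[a_i,b_i]$ and $J_i=[c_i,d_i]$, with $0<a_i<b_i<c_i<d_i<1$, all dyadic. Here we use that $I_i,J_i$ are internal and $\sup I_i<\inf J_i$. Define $h$ on the five consecutive intervals
\[
[0,a_0],\ [a_0,b_0],\ [b_0,c_0],\ [c_0,d_0],\ [d_0,1]
\]
mapping respectively onto
\[
[0,a_1],\ [a_1,b_1],\ [b_1,c_1],\ [c_1,d_1],\ [d_1,1].
\]
On $I_0$ and $J_0$ take the unique increasing affine maps onto $I_1$ and $J_1$. Since both source and target are regular dyadic intervals, these have slopes in $2^{\Z}$ and are of the form $x\mapsto 2^{k}x+p$ with $p$ dyadic. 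On the three gap intervals use the construction from the first paragraph. Each of these five intervals is nondegenerate precisely because the intervals are internal and disjoint in the stated order, which is where the hypotheses are used.

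The glued map is a continuous increasing bijection of $[0,1]$. It has finitely many breakpoints, all dyadic, and all its slopes lie in $2^{\Z}$, so $h\in F$. It satisfies (i) and (ii) by construction. The only step requiring care is the equal-cardinality subdivision argument in the first paragraph; the rest is bookkeeping.
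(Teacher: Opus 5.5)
Your proof is correct and complete. Gluing the affine maps $I_0\to I_1$ and $J_0\to J_1$ with piecewise-affine maps on the three nondegenerate gaps $[0,a_0]$, $[b_0,c_0]$, $[d_0,1]$, built by matching equal-size dyadic subdivisions, gives an element of $F$ satisfying (i) and (ii); you use the hypotheses that the intervals are internal and that $\sup I_i<\inf J_i$ exactly where they are needed. The paper gives no proof of this lemma, calling it well known and elementary, and your argument is the standard one.
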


Consider a set $X$ endowed with a group action of a group $G$.
An \emph{invariant mean} on a set $X$ is a finitely additive probability measure $m$ on the set of all subsets of $X$ such that $m(g\cdot A)=m(A)$.
The action of $G$ on $X$ is called \emph{extensively amenable} if it admits an invariant mean on the set
$$\PP_{\mathrm{fin}}(X):=\{S\sse X\;:\; S\text{ is finite }\}$$
such that for all $M\in\PP_{\mathrm{fin}}(X)$, the set
%\sam{removed $M^*:=$. The notation $\Omega_M$ seems more natural, to be introduced later}
\begin{equation}\label{eq:cone}
\{S\in\PP_{\mathrm{fin}}(X)\;:\; S\supseteq M\}
\end{equation}
has full measure.

The set $\PP_{\mathrm{fin}}(X)$ is naturally endowed with a group structure of an abelian group, where the operation is symmetric difference of sets and each element is an involution. Therefore, $\PP_{\mathrm{fin}}(X)$ is naturally isomorphic to $\oplus_{n\in X}\Z/2\Z$.
One may form the natural group extension given by the lamplighter restricted wreath product $$\PP_{\mathrm{fin}}(X)\rtimes G\cong (\oplus_{n\in X}\Z/2\Z)\rtimes G$$
It was proved in~\cite{JM2013} that the $G$-action on $X$ is extensively amenable if and only if the action of $\PP_{\mathrm{fin}}(X)\rtimes G$ on $\PP_{\mathrm{fin}}(X)$ given by $$(A,g)\cdot P=A\triangle (g\cdot P)$$ is amenable.
Since extensions of amenable groups by amenable groups are amenable, we obtain the following.
%\sam{added [\cite{JMBMS2018}] as they stated this explicitly}
\begin{lem}[\cite{JMBMS2018}]\label{lem:ext}
Every action of an amenable group is extensively amenable.
\end{lem}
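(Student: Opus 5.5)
The plan is to combine the Juschenko--Monod criterion quoted above with the closure of amenability under extensions. Let $G$ be amenable and let it act on a set $X$. Write $W=\PP_{\mathrm{fin}}(X)\rtimes G$. The abelian group $\PP_{\mathrm{fin}}(X)\cong\oplus_{n\in X}\Z/2\Z$ is amenable, since it is abelian, and it is normal in $W$ with quotient $G$. Since extensions of amenable groups by amenable groups are amenable, $W$ is amenable. By \cite{JM2013}, the $G$-action on $X$ is extensively amenable if and only if the $W$-action on $\PP_{\mathrm{fin}}(X)$ given by $(A,g)\cdot P=A\triangle(g\cdot P)$ is amenable. So it suffices to show that this action admits a $W$-invariant mean.

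The key step is the standard fact that every action of an amenable group is amenable. Take a left-invariant mean $\mu$ on $W$, fix the base point $P_0=\emptyset\in\PP_{\mathrm{fin}}(X)$, and for $E\sse\PP_{\mathrm{fin}}(X)$ define
\[
m(E)=\mu(\{w\in W: w\cdot P_0\in E\}).
\]
Finite additivity and $m(\PP_{\mathrm{fin}}(X))=1$ are inherited from $\mu$. Invariance follows because $\{w: w\cdot P_0\in vE\}=v\{w:w\cdot P_0\in E\}$ and $\mu$ is left-invariant. Hence the $W$-action is amenable, and the $G$-action on $X$ is extensively amenable.

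The only delicate point is the direction of the \cite{JM2013} equivalence we need, namely that amenability of the $W$-action implies extensive amenability. One has to check that the resulting mean on $\PP_{\mathrm{fin}}(X)$ can be chosen so that each cone \eqref{eq:cone} has full measure. If I were proving this from scratch rather than citing \cite{JM2013}, I would argue as follows. For $M$ finite, translating by the element $(M,e)\in W$ sends a set $P$ to $M\triangle P$. Averaging $m$ over the finite subgroup $\PP(M)\le\PP_{\mathrm{fin}}(X)$ and then pushing forward along $P\mapsto P\cup M$ should produce a mean concentrated on the cone of $M$. Making this compatible for all $M$ simultaneously, and keeping $G$-invariance, would then need a compactness or limit argument over the directed family of finite sets $M$. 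This is the main obstacle, and it is exactly the content of the cited equivalence. I would invoke \cite{JM2013} (as packaged in \cite{JMBMS2018}) for it rather than reprove it.
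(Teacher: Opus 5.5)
Your proof is correct and follows the same route as the paper: $W=\PP_{\mathrm{fin}}(X)\rtimes G$ is amenable as an extension of $G$ by an abelian group, so its action on $\PP_{\mathrm{fin}}(X)$ is amenable, and the equivalence from \cite{JM2013} then gives extensive amenability; your pushforward of an invariant mean on $W$ along the orbit map spells out a step the paper leaves implicit. The from-scratch sketch in your last paragraph is not needed, since you, like the paper, rely on the cited equivalence for that direction.
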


\section{Proof of the theorem}
We will establish that the action of $F$ on $D$ is not extensively amenable. Once this is established, \cref{lem:ext} would imply that $F$ is not amenable. 
%\sam{new remark. delete this if it's rather boring}
%\begin{rem}
%Our proof also implies that the action on $[0,1]$ is not extensively amenable. We chose to describe the proof for the countable set $D$ for simplicity.
%\begin{rem}Given two pairs $(I_0,J_0), (I_1,J_1)$ such that $I_i,J_i\in \II$ and $\sup(I_i)<\inf(J_i)$,  there exists an $h\in F$ such that $$h(I_0)=I_1\qquad h(J_0)=J_1$$
%and so that the restrictions of $h$ to $I_0,J_0$ are affine.
%\begin{enumerate}
%\item
%Our proof also implies that the action on $[0,1]$ is not extensively amenable. We chose to describe the proof for the countable set $D$ for simplicity. 
%    \item Note the stark contrast between the amenability and the extensive amenability of actions: the action of $F$ on $X$ is amenable since it preserves the Direc delta measure on $\{0\}$.
%\end{enumerate}
%\end{rem}

Let us now assume by way of contradiction that the action 
of $F$ on $D$ is 
%on $\Omega:=\PP_{\mathrm{fin}}(D)$ is 
extensively amenable. 
So there exists a finitely additive $F$-invariant probability measure $\mu$ on $\Omega:=\PP_{\mathrm{fin}}(D)$ satisfying:
$$
\mu\bigl(\Omega_M)=1
\qquad
\text{for every }M\in \Omega,
$$
where $$\Omega_M:=\{P\in \Omega \mid M\subseteq P\}$$
We fix this measure for the rest of the article.

Let us fix two nonempty $I_0,I_1\in \II$ so that $\sup(I_0)<\inf(I_1)$,
and denote the unique orientation-preserving affine bijections: $$s_j:[0,1]\to I_j\qquad j\in\{0,1\}$$
%$\Omega=\mathbb{Z}[1/2]\cap [0,1]$ and  $$I_0=[1/8,1/4], \quad I_1:=[3/4,7/8].$$
We also fix affine maps:
$$s_{ij}:= s_i\circ s_j:[0,1]\to I_{ij},\quad\text{ where }I_{ij}=s_{ij}([0,1]).$$
Note that $\{I_{ij}\}_{i,j=0,1}$ is a pairwise disjoint family of four intervals in $\II$.
For each $P\in\Omega$ we define
$$P_i:= s_i^{-1}(P\cap I_i)\qquad 0\leq i\leq 1.$$ 
We extend this definition to:
$$P_{ij}=(P_i)_j=s_j^{-1}(s_i^{-1}(P\cap I_i)\cap I_j)=s_{ij}^{-1}(P\cap I_{ij}).$$

We shall define a map from $\Omega$ into the unit ball of the real Hilbert space $\HH=l^2(\N)$. 
We denote the unit ball as: $$B:=\{v\in\HH\;:\; \|v\|\le 1\}.$$
Let us first recall the following well known result.
\begin{thm}[\cite{BS1983}]\label{thm:BS1983}
There exists a Lipschitz map $f:B\to B$ such that: 
$$\inf_{x\in B}\|f(x)-x\|=\delta>0$$
and so that the Lipschitz constant equals $L<\sqrt{2}$.
%Moreover, by choosing a sufficiently small $t>0$ and replacing $f$ by $f_t:=(1-t)\Id+t f$, we obtain such a map with a Lipschitz constant $L<\sqrt{2}$.
\end{thm}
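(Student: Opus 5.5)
The plan is to reduce the statement to the existence of a Lipschitz retraction of $B$ onto its unit sphere $S=\{v\in\HH:\|v\|=1\}$. This is the actual content of \cite{BS1983}. Once we have such a retraction, we compose it with the antipodal map, which gives displacement bounded below. We then interpolate with the identity to push the Lipschitz constant below $\sqrt{2}$.

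\emph{Step 1 (retraction).} Produce a $K$-Lipschitz map $R:B\to S$ with $R|_S=\Id$. I expect this to be the main obstacle, since it genuinely uses infinite-dimensionality; in finite dimensions no such map exists by Brouwer. My first attempt would start from Kakutani's fixed-point-free map $k(x)=(1-\|x\|)e_1+\sigma(x)$, where $\sigma$ is the right shift on $\ell^2(\N)$. This map is Lipschitz and has no fixed point: a fixed point would force all coordinates to be equal, hence zero, hence $x=e_1$, which is a contradiction. However, its displacement $\|k(x)-x\|$ is not bounded away from $0$. I would therefore follow Benyamini--Sternfeld and modify the construction so that the displacement becomes uniformly positive. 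Concretely, the idea is to push $x$ along a suitably reparametrized chain of shifts so that the ray from $k(x)$ through $x$ meets $S$ at a point depending Lipschitz-continuously on $x$. I then take $R(x)$ to be that exit point, which gives the identity on $S$. If the reconstruction gets delicate, I would simply quote the retraction from \cite{BS1983}.

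\emph{Step 2 (displacement).} Set $g=-R$. This is $K$-Lipschitz and maps $B$ into $S\subset B$. For any $x\in B$, write $x=\|x\|u$ with $u\in S$. Since $R(u)=u$, we have $\|R(x)-x\|\le \|R(x)-R(u)\|+\|u-x\|\le (K+1)(1-\|x\|)$. We then split into two cases, with a threshold $r<1$ chosen close to $1$.
\begin{itemize}
\item If $\|x\|\ge r$, then $\|g(x)-x\|=\|R(x)+x\|\ge 2\|x\|-\|R(x)-x\|\ge 2r-(K+1)(1-r)$, which is positive for $r$ close to $1$.
\item If $\|x\|\le r$, then $\|R(x)+x\|\ge \|R(x)\|-\|x\|\ge 1-r$.
\end{itemize}
Hence $\inf_{x\in B}\|g(x)-x\|=\delta_0>0$.

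\emph{Step 3 (lowering the constant).} For $t\in(0,1]$ put $f=(1-t)\Id+tg$. By convexity $f$ maps $B$ into $B$. Its displacement is $\|f(x)-x\|=t\|g(x)-x\|\ge t\delta_0=:\delta>0$. Its Lipschitz constant is at most $L=(1-t)+tK$, and this is $<\sqrt2$ as soon as $t(K-1)<\sqrt2-1$, which holds for $t$ small. This completes the plan.
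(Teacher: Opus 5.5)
Your proposal is correct and matches the paper in substance. The paper offers no argument beyond citing \cite{BS1983}, and the note left in the statement obtains $L<\sqrt2$ exactly as in your Step~3, by passing to $(1-t)\Id+tf$; your Step~2 estimates are also correct. The detour through the retraction is redundant, though: as your own sketch indicates, making the ray-exit point Lipschitz in $x$ first requires a Lipschitz self-map of $B$ with uniformly positive displacement, which is already the statement up to the constant. You can therefore quote that map from \cite{BS1983}, as the paper does, and go straight to Step~3.
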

%By choosing a sufficiently small $t>0$ and defining $f_t:=(1-t)\Id+t f$, we obtain a map $f:B\to B$ with a Lipschitz constant $L$ less than $\sqrt2$ such that
%$$\inf_{x\in B}\|f(x)-x\|>0.$$

%Let us give the lexicographical order $<$ on the set of length-two binary words $$\CC_2:=\{00,01,10,11\}.$$
%We extend this order to a partial \emph{anti-chain} order $<$
%$$\CC:=\{0,1,00,01,10,11\}$$ so that $u<v$ in $\CC$
%if and only if $u<v$ in the lexicographic order, 

Let us define the \emph{anti-chain} order $<$ on the set:
$$\CC:=\{0,1,00,01,10,11\}$$ so that $u<v$ in $\CC$
if and only if neither is a prefix of the other, and in the first digit where they differ, $u$ contains a $0$ and $v$ contains a $1$. For instance, we have $0<10$, but $1$ and $11$ are not comparable. 
We note the following.
%\sam{added}
\begin{lem}\label{lem:F-invariant}
\begin{enumerate}
    \item If $a,b\in\CC$ and $g\in F$ satisfy
    $$gs_a=s_b,$$ then we have 
    $(gP)_b=P_a$ for each $P\in \Omega$.
    \item For all $u<v$ in $\CC$, there exists some $g\in F$
    such that 
    $$gs_u=s_0\quad\text{ and }\quad gs_v=s_1.$$
\end{enumerate}\end{lem}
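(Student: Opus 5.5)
Part (1) is a direct computation from the definitions. For $a\in\CC$ write $s_a$ for the corresponding affine map and $I_a=s_a([0,1])$, so that $P_a=s_a^{-1}(P\cap I_a)$; this holds for one-letter words by definition and for two-letter words by the identity $P_{ij}=s_{ij}^{-1}(P\cap I_{ij})$ recorded above. Suppose $gs_a=s_b$. Then $g(I_a)=I_b$, and since $g$ is a bijection of $[0,1]$,
$$gP\cap I_b=g(P)\cap g(I_a)=g(P\cap I_a).$$
Applying $s_b^{-1}=(gs_a)^{-1}=s_a^{-1}g^{-1}$ gives
$$(gP)_b=s_b^{-1}(gP\cap I_b)=s_a^{-1}g^{-1}g(P\cap I_a)=s_a^{-1}(P\cap I_a)=P_a.$$
This finishes (1).

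For (2), I will apply \cref{lem:transitive} with $(I_u,I_v)$ as the first pair and $(I_0,I_1)$ as the second pair. Three hypotheses need checking.
\begin{enumerate}[(i)]
\item All four intervals lie in $\II$. This holds for $I_0,I_1$ by choice. For two-letter words, $s_i$ is affine with slope a power of $2$ and sends $0$ to a dyadic rational, so it carries regular dyadic intervals of $[0,1]$ to regular dyadic intervals inside $I_i\subset(0,1)$. Hence each $I_{ij}\in\II$, and this fact is also stated in the text.
\item $\sup(I_0)<\inf(I_1)$. This holds by choice.
\item $\sup(I_u)<\inf(I_v)$ whenever $u<v$. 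Let $k$ be the first position where $u$ and $v$ differ, so $u_k=0$ and $v_k=1$, and let $w$ be the common prefix before position $k$, which is empty or a single letter. Then $I_u\subseteq s_w(I_0)$ and $I_v\subseteq s_w(I_1)$, with $s_w=\Id$ when $w$ is empty. Since $s_w$ is increasing and $\sup I_0<\inf I_1$, the inequality follows.
\end{enumerate}

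\cref{lem:transitive} then provides $g\in F$ with $g(I_u)=I_0$ and $g(I_v)=I_1$, and with $g$ affine on $I_u$ and on $I_v$. On $I_u$, the map $g$ is an increasing affine bijection onto $I_0$, since elements of $F$ preserve orientation. Such a map is unique, so $g|_{I_u}=s_0\circ s_u^{-1}$, which means $gs_u=s_0$. The same argument gives $gs_v=s_1$.

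The only real subtlety is the step in (iii) of reducing the antichain order to the actual left-to-right order of the intervals, together with the uniqueness of increasing affine bijections. Neither presents a real obstacle.
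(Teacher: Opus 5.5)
Your proof is correct and follows the paper's argument. Part (1) is the same direct computation using $g(I_a)=I_b$, and part (2) likewise applies \cref{lem:transitive} to the pairs $(I_u,I_v)$ and $(I_0,I_1)$, then uses uniqueness of the increasing affine bijection onto $I_0$ (resp.\ $I_1$); your explicit checks that each $I_{ij}\in\II$ and that $u<v$ forces $\sup I_u<\inf I_v$ fill in details the paper states without proof.
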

\begin{proof}
(1) We note the computation
\begin{align*}
(gP)_b&=s_b^{-1}\left(gP\cap I_b\right)
=s_a^{-1}g^{-1}\left(gP\cap I_b\right)\\
&=s_a^{-1}\left(P\cap g^{-1}I_b\right)
=s_a^{-1}\left(P\cap I_a\right)=P_a.
\end{align*}

(2) Note that the internal regular dyadic intervals $I_u$ and $I_v$ satisfy
$$\max I_u<\min I_v.$$
By \cref{lem:transitive}, we can choose some $g\in F$ such that $g$ restricts to affine maps $I_u\to I_0$ and $I_v\to I_1$. The maps $gs_u$ and $s_0$ coincide, since both are affine maps from $[0,1]$ to $I_0$. Similarly, we have $gs_v=s_1$, completing the proof.
\end{proof}

We now define a map
recursively as: $c:\Omega\to B$ by 
$$c(P):=\begin{cases}
f\left( \frac{c(P_{0})+c(P_{1})}{2}\right),& \text{ if }P\cap I_0\neq \emptyset, P\cap I_1\neq \emptyset,\\
0,&\text{ otherwise}.
\end{cases}$$
The above map $c$ is well-defined by induction, 
as $|P| \ge  \left|P_i\right|+1$ whenever $P\cap I_{j}\neq \emptyset$ for $0\leq j\leq 1$. 
%\sam{capitalize $Z_i$ for consistency}
For $i,j\in\{0,1\}$, define
$$
Y_i(P):=c(P_i),
\qquad
Y_{ij}(P):=c(P_{ij}),
\qquad
Z_i(P):=\frac{Y_{i0}(P)+Y_{i1}(P)}{2}.
$$

%\sam{changed from $\Omega_2$ to $\Omega^*$, to avoid confusion from $\Omega_A$}
%\sam{changed from ``. and observe'' to ``, and observe''}
We also define
$$
\Omega^*:=
\left\{
P\in\Omega:
P_{ij}\neq\varnothing
\text{ for all }i,j\in\{0,1\}
\right\},
$$
and observe:

\begin{lem}
$\mu(\Omega^*)=1$.
\end{lem}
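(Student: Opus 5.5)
The plan is to show that $\Omega^*$ contains a set of the form $\Omega_M$ for a suitable finite $M\in\Omega$. The standing assumption on $\mu$ then gives $\mu(\Omega_M)=1$, and monotonicity of the finitely additive probability measure $\mu$ yields $\mu(\Omega^*)=1$.

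First I would choose one dyadic point in each of the four intervals $I_{ij}$. For instance, let $q_{ij}:=s_{ij}(1/2)$. Each $s_{ij}$ is an affine map of $[0,1]$ onto a regular dyadic interval, so it has the form $x\mapsto \frac{k+x}{2^n}$. Hence $q_{ij}$ is a dyadic rational lying in $I_{ij}\subset(0,1)$, and in particular $q_{ij}\in D$. Set
$$M:=\{q_{00},q_{01},q_{10},q_{11}\}\in\Omega.$$

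Next I would check that $\Omega_M\subseteq\Omega^*$. Let $P\supseteq M$. Then $q_{ij}\in P\cap I_{ij}$, so $\tfrac12=s_{ij}^{-1}(q_{ij})\in s_{ij}^{-1}(P\cap I_{ij})=P_{ij}$. Here I use the identity $P_{ij}=s_{ij}^{-1}(P\cap I_{ij})$ established above. Thus $P_{ij}\neq\varnothing$ for all $i,j$, and so $P\in\Omega^*$.

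Finally, finite additivity gives $\mu(\Omega^*)\ge\mu(\Omega_M)=1$, and therefore $\mu(\Omega^*)=1$. There is no real obstacle in this argument. The only care needed is in verifying that the chosen points are genuinely in $D$, and that the two-step definition of $P_{ij}$ agrees with $s_{ij}^{-1}(P\cap I_{ij})$. Both follow directly from $s_{ij}=s_i\circ s_j$ and $I_{ij}\subseteq I_i$.
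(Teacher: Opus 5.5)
Your proof is correct and follows the paper's argument: choose a finite $M\subset D$ meeting each $I_{ij}$, observe $\Omega_M\subseteq\Omega^*$, and conclude from $\mu(\Omega_M)=1$ by monotonicity. The paper additionally asks that $M\cap I_i\neq\varnothing$, but this is automatic from $I_{ij}\subseteq I_i$. You have also explicitly checked two details the paper leaves implicit: that $s_{ij}(1/2)\in D$, and that $P_{ij}=s_{ij}^{-1}(P\cap I_{ij})$.
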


\begin{proof}
Consider any finite set $A\subset D$ such that $A\cap I_i\neq \emptyset$ and $A\cap I_{ij}\neq \emptyset$ for each $i,j\in\{0,1\}$.
Then $\mu(\Omega_A)=1$ and $\Omega_A\subseteq\Omega^*$.
\end{proof}

% Requires amsmath, amssymb, and amsthm.
% In the preamble, if not already defined:
% \newtheorem{lemma}{Lemma}
The following Lemma shows certain crucial identities that hold for $P\in \Omega^*$.

%Note that all the identities in part (3) of the previous Lemma hold on a set of
%$\mu$-measure one, for instance $\Omega^*$.
%We are hence able to state and prove the following.

\begin{lem}\label{lem:final}
Assume that $f$ is as in Theorem \ref{thm:BS1983} and $L$-Lipschitz.
%$$
%\delta:=\inf_{v\in B}\|f(v)-v\|>0,
%$$
Then, for every $P\in\Omega^*$, the following holds.

\begin{enumerate}
\item $Y_i(P)=f(Z_i(P))$ for every $i\in \{0,1\}$.
\item $
\|Y_0(P)-Y_1(P)\|
\le L\|Z_0(P)-Z_1(P)\|
$.

\item $
\|Y_i(P)-Z_i(P)\|\ge\delta$ for each $i\in \{0,1\}$.

\end{enumerate}

\end{lem}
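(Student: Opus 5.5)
The plan is to unwind the recursive definition of $c$ one level, applied to the sets $P_i$ instead of $P$, and then read off (2) and (3) directly from the two properties of $f$ given in \cref{thm:BS1983}.

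For (1), fix $P\in\Omega^*$ and $i\in\{0,1\}$. By definition $Y_i(P)=c(P_i)$. We apply the recursive formula for $c$ to the finite set $P_i$, so we must check that $P_i\cap I_0\neq\emptyset$ and $P_i\cap I_1\neq\emptyset$. Since $s_j^{-1}$ maps $P_i\cap I_j$ bijectively onto $(P_i)_j=P_{ij}$, the condition $P_i\cap I_j\neq\emptyset$ is equivalent to $P_{ij}\neq\emptyset$. This holds for both $j$ by the definition of $\Omega^*$. Hence the first case of the definition applies, and
$$c(P_i)=f\Bigl(\tfrac{c((P_i)_0)+c((P_i)_1)}{2}\Bigr)=f\Bigl(\tfrac{Y_{i0}(P)+Y_{i1}(P)}{2}\Bigr)=f(Z_i(P)),$$
using $(P_i)_j=P_{ij}$ from the extended definition. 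The only point needing care is that $P_i$ is again a finite subset of $D$. This holds because $s_i$ is an affine map with dyadic data, so $s_i^{-1}$ sends $D\cap I_i$ into $D$. Therefore $P_i\in\Omega$ and $c(P_i)$ is defined.

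For (2), we combine (1) with the $L$-Lipschitz property of $f$:
$$\|Y_0(P)-Y_1(P)\|=\|f(Z_0(P))-f(Z_1(P))\|\le L\|Z_0(P)-Z_1(P)\|.$$
This requires $Z_i(P)\in B$. That follows because each $Y_{ij}(P)$ lies in $B$ (since $c$ takes values in $B$) and $B$ is convex, so the midpoint $Z_i(P)$ is also in $B$.

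For (3), again by (1), $\|Y_i(P)-Z_i(P)\|=\|f(Z_i(P))-Z_i(P)\|$. With $Z_i(P)\in B$, this is at least $\inf_{x\in B}\|f(x)-x\|=\delta$.

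No step is a real obstacle. The only things to verify are that the recursion is legitimately applicable to $P_i$ (membership in $\Omega$, and the nonemptiness conditions coming from $\Omega^*$) and that the midpoints stay in $B$ by convexity.
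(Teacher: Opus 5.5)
Your proof is correct and follows the paper's argument: unwind the recursion for $c$ once at $P_i$, then apply the Lipschitz bound and the displacement bound $\delta$, using that $Z_i(P)\in B$. Your justification that the recursion applies to $P_i$ (namely $P_i\cap I_j\neq\emptyset$ if and only if $P_{ij}\neq\emptyset$) is the precise condition needed; the paper instead cites $P_0,P_1\neq\emptyset$, which is the condition for unwinding $c(P)$ rather than $c(P_i)$.
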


\begin{proof}
Let $P\in\Omega^*$. The fact that
$Y_i(P)=f(Z_i(P))$ follows from the definition since $P\in \Omega^*$ implies that $P_0,P_1\neq \emptyset$.
Using this, the Lipschitz property gives us
$$
\|Y_0(P)-Y_1(P)\|
=
\|f(Z_0(P))-f(Z_1(P))\|
\le L\|Z_0(P)-Z_1(P)\|.
$$
Also, $Z_i(P)\in B$, so the definition of $\delta$
gives
$$
\|Y_i(P)-Z_i(P)\|
=
\|f(Z_i(P))-Z_i(P)\|
\ge\delta.\qedhere
$$
\end{proof}

%\sam{added a paragraph for the idea}
We will now establish that the vectors $Y_i(P)$ and $Y_{ij}(P)$ are at the equal distances from each other on average, over $P\in\Omega^*$. It will then follow that the distances between $Z_0$ and $Z_1$ are uniformly shorter, 
which by the Lipschitz property of $f$ implies that these vectors all coincide in the sense of mean-distance. 
This would induce a contradiction upon applying the uniform lower bound for the displacement of $f$. 

To be more concrete, recall that $\mu$ defines a finitely additive left-invariant integral $\mathbb{E}$, which is a positive, normalized, linear functional:
$$\bE:L^\infty(\Omega;\bR)\to \bR\qquad \bE(\phi\circ g)=\bE(\phi), \forall \phi\in L^\infty(\Omega;\bR), \forall g\in F$$ 
defined as:
$$\bE(\phi):=\int_{\Omega}\phi d\mu$$
%\sam{lemma $=>$ Lemma}
The following lemma is where the invariance of our mean is used.

\begin{lem}\label{lem:invariance2}
For all $u,v$ in $\CC=\{0,1,00,01,10,11\}$ satisfying the anti-chain order $u<v$, we have that
$$ \bE\abss{Y_u-Y_v}^2 = \bE\abss{Y_0-Y_1}^2.$$
\end{lem}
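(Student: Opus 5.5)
We will use \cref{lem:F-invariant} together with the $F$-invariance of $\bE$. Here $Y_u(P)=c(P_u)$ for any word $u\in\CC$ (with $Y_0,Y_1$ the length-one cases). Fix $u<v$ in $\CC$.

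By \cref{lem:F-invariant}(2), there is some $g\in F$ with $gs_u=s_0$ and $gs_v=s_1$. Applying \cref{lem:F-invariant}(1) with $(a,b)=(u,0)$ and $(a,b)=(v,1)$ gives, for every $P\in\Omega$,
$$(gP)_0=P_u\qquad\text{and}\qquad (gP)_1=P_v.$$
Hence $Y_0(gP)=c((gP)_0)=c(P_u)=Y_u(P)$, and likewise $Y_1(gP)=Y_v(P)$.

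Set $\phi:=\abss{Y_0-Y_1}^2$. This is a bounded function on $\Omega$, since $c$ takes values in the unit ball $B$ and so $0\le\phi\le 4$. Therefore $\phi\in L^\infty(\Omega;\bR)$ and $\bE\phi$ is defined. The identities above say exactly that
$$\phi\circ g=\abss{Y_u-Y_v}^2,$$
where $(\phi\circ g)(P)=\phi(gP)$.

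Invariance of $\bE$ then gives $\bE\abss{Y_u-Y_v}^2=\bE(\phi\circ g)=\bE\phi=\bE\abss{Y_0-Y_1}^2$, which is the claim.

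The only point needing care is the left/right convention in the invariance. The measure satisfies $\mu(gA)=\mu(A)$, which in integral form reads $\int\phi(g^{-1}P)\,d\mu=\int\phi\,d\mu$. Since this holds for all $g\in F$, including $g^{-1}$, it also gives $\bE(\phi\circ g)=\bE\phi$ with $(\phi\circ g)(P)=\phi(gP)$, as stated in the paper. So the direction of composition causes no trouble, and the argument reduces to the verification $(gP)_b=P_a$ already established in \cref{lem:F-invariant}.
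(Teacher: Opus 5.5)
Your proposal is correct and is essentially the paper's argument. You choose $g$ via \cref{lem:F-invariant}(2), use part (1) to get $(gP)_0=P_u$ and $(gP)_1=P_v$, and conclude by the $F$-invariance of $\bE$; your added remarks on the boundedness of $\phi$ and on the left/right convention in the invariance are accurate details the paper leaves implicit.
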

\begin{proof}
By \cref{lem:F-invariant}, there exists some $g\in F$ such that
$gs_u=s_0$ and $gs_v=s_1$. Using part (1) of the same lemma
and the $F$-invariance of $\mu$, we see that
\begin{align*}
\bE\abss{Y_u-Y_v}^2
&= \int_\Omega \abss*{Y_u(P)-Y_v(P)}^2 d\mu(P)
= \int_\Omega \abss*{c(P_u)-c(P_v)}^2 d\mu(P)\\
&
= \int_\Omega \abss*{c((gP)_0)-c((gP)_1)}^2 d\mu(P)=\bE\abss{Y_0-Y_1}^2.\qedhere
\end{align*}
%The proof is complete.
\end{proof}
 
%\begin{lem}\label{lem:invariance2} For all $u,v$ in $\CC=\{0,1,00,01,10,11\}$ satisfying the anti-chain order $u<v$, we have that $$ \bE\abss{Y_u-Y_v} = \bE\abss{Y_0-Y_1}.$$ The following holds: \begin{enumerate} \item For each pair $(i,j)\in \{0,1\}^2, k\in \{0,1\}, i\neq k$, it holds that: $$\mathbb{E}||Y_{ij}-Y_{k}||^2=\mathbb{E}||Y_0-Y_1||^2$$ \item For each $(i,j)\neq (k,l)\in \{0,1\}^2$, $$\mathbb{E}||Y_{ij}-Y_{kl}||^2=\mathbb{E}||Y_0-Y_1||^2$$ \end{enumerate} \end{lem} \begin{proof} We prove part $(1)$, and the proof of part $(2)$ is similar. We assume without loss of generality that $\sup(I_{ij})<\inf(I_k)$. The proof for the case $\sup(I_{k})<\inf(I_{ij})$ is similar. The proof follows from the last part of Lemma \ref{lem:transitive} which provides an element $g\in F$ such that: \begin{enumerate} \item $g(I_{ij})=I_0$ and $g(I_{k})=I_1$. \item The restrictions of $g$ to $I_{ij},I_{k}$ are affine. \item $\mathbb{E}$ is $F$-invariant. \end{enumerate} \end{proof}
 
Recall the following classical result of Apollonius on Euclidean geometry, which also works in Hilbert spaces.
\begin{prop}\label{prop:Ap}
Given vectors $x,a,b\in \HH$, the following holds.
$$
\abss*{ x-\frac{a+b}2}^2=\frac12\left(
\|x-a\|^2+\|x-b\|^2-\frac{\|a-b\|^2}2
\right).
$$
\end{prop}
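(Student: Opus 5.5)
This is the parallelogram law in disguise, so the plan is to expand every squared norm with the inner product $\form{\cdot,\cdot}$ of $\HH$ and compare the two sides term by term. No earlier result of the paper is needed; the identity is pure Hilbert-space algebra and holds in any real inner product space.

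First, write $x-\frac{a+b}{2}=\frac12\bigl((x-a)+(x-b)\bigr)$ and set $p:=x-a$ and $q:=x-b$. Then $p-q=b-a$, so $\|a-b\|^2=\|p-q\|^2$. The left-hand side becomes $\frac14\|p+q\|^2$, and the right-hand side becomes $\frac12\bigl(\|p\|^2+\|q\|^2\bigr)-\frac14\|p-q\|^2$.

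Second, expand with bilinearity and symmetry of the inner product:
$$\|p\pm q\|^2=\|p\|^2\pm2\form{p,q}+\|q\|^2.$$
The left-hand side equals $\frac14\bigl(\|p\|^2+\|q\|^2\bigr)+\frac12\form{p,q}$. The right-hand side equals $\frac12\bigl(\|p\|^2+\|q\|^2\bigr)-\frac14\bigl(\|p\|^2+\|q\|^2\bigr)+\frac12\form{p,q}$, which is the same expression. Equivalently, the identity is $\|p+q\|^2+\|p-q\|^2=2\|p\|^2+2\|q\|^2$ divided by $4$.

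There is no genuine obstacle here. The only point requiring care is the bookkeeping of the factors $\frac12$ and $\frac14$, and the change of variables to $p$ and $q$ is what keeps that bookkeeping transparent.
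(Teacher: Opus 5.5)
Your proof is correct: with $p=x-a$ and $q=x-b$, both sides reduce to $\frac14(\|p\|^2+\|q\|^2)+\frac12\form{p,q}$, so the identity is the parallelogram law divided by $4$. The paper gives no proof and simply recalls this as the classical theorem of Apollonius, valid in any real Hilbert space, so your direct expansion is the standard verification it leaves implicit (if you insert it, rename your $q$, since the paper later uses $q$ for $\bE\|Y_0-Y_1\|^2$).
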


\begin{proof}[Proof of Theorem \ref{thm:main}]
First, we fix $q$ to denote the quantity $\bE \|Y_0-Y_1\|^2$.
%\sam{Note that Lemma tells us that $=>$ Note from Lemma that}
Note from Lemma \ref{lem:invariance2} that the integrals described in the statement are also equal to $q$.
We will be making repeated use of this fact in what follows below.
%\sam{Style change $\| \cdot \|$ $=>$ abss delimiter}
Applying Apollonius Theorem (\cref{prop:Ap}), 
to $Z_0$ and $Z_1= (Y_{10}+Y_{11})/2$, we obtain
\begin{align*}
\abss*{Z_0-Z_1}^2
&
%=\abss*{Z_0-\frac{Y_{10}+Y_{11}}{2}}^2
=\frac12
\left(
\abss*{Z_0-Y_{10}}^2+\abss*{Z_0-Y_{11}}^2-\frac{\abss*{Y_{10}-Y_{11}}^2}2
\right)
\end{align*}
%\sam{deductions broken into two parts, for a later reference by \cref{thm:main2}}
Applying the theorem again to $Z_0=(Y_{00}+Y_{01})/2$, we have
\begin{equation}\label{eq:z0z1}
\abss*{Z_0-Z_1}^2
=\frac14
\left(\sum_{i,j=0,1} \abss*{Y_{0i}-Y_{1j}}^2-
\sum_{i=0,1} \abss*{Y_{i0}-Y_{i1}}^2
\right).
\end{equation}
So we obtain, upon applying $\bE$ to both sides over $\Omega^*$ (which has measure $1$), and applying Lemma \ref{lem:invariance2}, that $\bE \|Z_0-Z_1\|^2=q/2$.

%\sam{Claim environment used}
\begin{claim*}
We have $q=0$.
\end{claim*}

Assume by way of contradiction that $q>0$. Then since $L<\sqrt{2}$ and $\|Y_0-Y_1\|^2\le L^2 \|Z_0-Z_1\|^2$ for the full measure set $\Omega^*$, the Lipschitz property implies
$$q=\bE \|Y_0-Y_1\|^2\le L^2 \bE \|Z_0-Z_1\|^2=\frac{qL^2}{2}< q$$
This is a contradiction, and so we obtain that $q=0$. 
%Note that here we used the identities from \cref{lem:invariance2} that hold
%for the full measure set $\Omega^*$.

%\sam{Fixed an error: now use the distance between $Y_1$ and $Y_{00}$, for a specific choice of generators} 
By Apollonius's Theorem and \cref{lem:invariance2} again, we have:
\begin{equation}\label{eq:y0z1}
\|Y_1-Z_0\|^2
\le 2\abss*{Y_1-Y_{00}}^2+2\abss*{Y_{00}-Z_0}^2
=2\abss*{Y_1-Y_{00}}^2+ \frac{\abss{Y_{00}-Y_{01}}^2}2.
%2\|Y_1-Z_0\|^2&\le  \|Y_1-Y_{00}\|^2+\|Y_{0}-Y_{11}\|^2-\frac{\|Y_{10}-Y_{11}\|^2}2.
\end{equation}
By Lemma~\ref{lem:invariance2}, we have $\bE\|Y_1-Z_0\|^2\le 2q+ q/2 = 0$. 
It follows that
$$\bE\|Y_1-Z_1\|^2=\bE\|Y_1-Z_0+Z_0-Z_1\|^2
\le 2\bE\|Y_1-Z_0\|^2+2\bE \|Z_0-Z_1\|^2=0.
$$
This contradicts the hypothesis that $\inf_{x\in B}\|f(x)-x\|>0$.
This means that our assumption that the $F$-action on $\Omega$ is extensively amenable is false. 
Therefore, $F$ is nonamenable.
The proof is complete.
\end{proof}

The above proof uses the properties of $F$ rather loosely; namely, it only uses the existence of certain generators in $F$ that are ``equivariantly'' acting on $[0,1]$ with respect to the affine contractions $s_i$. 
%\sam{Explained the efficient choice of generators}
More precisely, the integral $\bE$ was only applied to Equations \eqref{eq:z0z1} and \eqref{eq:y0z1}.
Setting $$\CC_2:=\{00,01,10,11\},$$
 we used that
$$\bE\abss{Y_u-Y_v}^2=\bE\abss{Y_0-Y_1}^2
=\bE\abss{Y_{00}-Y_1}^2$$
for $u<v$ in $\CC_2$.
%\sam{changed $a$ to $h_0$}
Hence, the argument naturally generalizes to the setting of abstract sets with an efficient choice of generators $h_0$ and $h_{uv}$ below. We omit the proof, which is a straightforward analogue of that of \cref{thm:main}. We denote by $\Sym(X)$ the permutation group on a set $X$.

\begin{thm}\label{thm:main2}
Let $X$ be a set, and let $A_0,A_1$ be disjoint nonempty subsets
equipped with bijections $s_i:X\to A_i$ for $i=0,1$.
Setting $s_{ij}:=s_i s_j$,
%and $A_{ij}:=s_{ij}(X)$.
we assume the following.
%\sam{changed from $u,v$ to $u<v$ in (i)}
\begin{enumerate}[(i)]
\item For each $u < v\in\CC_2$, there exists some $h_{uv}\in \Sym(X)$ such that $h_{uv}s_u=s_0$, and such that $h_{uv}s_v=s_1$.
\item There exists some $h_0\in \Sym(X)$ such that 
$h_0s_{00}=s_0$ and $h_0s_1=s_1$.
\end{enumerate}
Then the group 
$$G:=\form*{h_0,h_{uv}\in\Sym(X)\;:\; u<v\text{ in }\CC_2}$$
is not amenable.
\end{thm}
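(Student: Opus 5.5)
We argue by contradiction, mimicking the proof of \cref{thm:main} with $X$ in place of $D$ and $G$ in place of $F$. Suppose $G$ is amenable. By \cref{lem:ext}, the action of $G$ on $X$ is extensively amenable. So there is a $G$-invariant finitely additive probability measure $\mu$ on $\Omega:=\PP_{\mathrm{fin}}(X)$ with $\mu(\Omega_M)=1$ for every finite $M\sse X$; let $\bE$ be the associated invariant integral. For $P\in\Omega$ and $w\in\{0,1,00,01,10,11\}$ put $P_w:=s_w^{-1}(P\cap A_w)$, where $A_w=s_w(X)$. Since $A_0\cap A_1=\emptyset$, for each $P$ we have $|P|\ge |P_0|+|P_1|$, and $|P_i|<|P|$ whenever $P_{1-i}\neq\emptyset$. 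With $f$ as in \cref{thm:BS1983}, we define $c:\Omega\to B$ recursively exactly as before: $c(P)=f\bigl(\frac{c(P_0)+c(P_1)}2\bigr)$ if $P_0,P_1\ne\emptyset$, and $c(P)=0$ otherwise. The recursion terminates because in the first case $|P_i|<|P|$. We then set $Y_w(P)=c(P_w)$ and $Z_i=(Y_{i0}+Y_{i1})/2$, and let $\Omega^*$ be the set of $P$ with all $P_{ij}\neq\emptyset$. Choosing one point in each of the four sets $A_{ij}$ gives a finite $M$ with $\Omega_M\sse\Omega^*$, so $\mu(\Omega^*)=1$. On $\Omega^*$ the statements of \cref{lem:final} hold verbatim, since their proof used only the recursion and the properties of $f$.

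Next comes the invariance step, the analogue of \cref{lem:invariance2}. The computation in \cref{lem:F-invariant}(1) is purely set-theoretic. If $g\in\Sym(X)$ satisfies $gs_a=s_b$, then $g^{-1}A_b=A_a$, and hence $(gP)_b=s_a^{-1}(P\cap A_a)=P_a$. Applying this to $h_{uv}$ from (i) gives $Y_0\circ h_{uv}=Y_u$ and $Y_1\circ h_{uv}=Y_v$. By $G$-invariance of $\bE$, we get $\bE\|Y_u-Y_v\|^2=\bE\|Y_0-Y_1\|^2=:q$ for all $u<v$ in $\CC_2$. Similarly, $h_0$ from (ii) gives $\bE\|Y_{00}-Y_1\|^2=q$. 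One point needs care here. The integrands are bounded by $4$ because $c$ takes values in $B$, so they lie in $L^\infty$. Also, the identity $\phi\circ g$ with $\phi=\|Y_0-Y_1\|^2$ holds pointwise on all of $\Omega$, not merely on $\Omega^*$, so no measure-zero issues arise.

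The rest is the computation from the proof of \cref{thm:main}. First, \eqref{eq:z0z1} holds pointwise by \cref{prop:Ap}. It involves the four cross terms $\|Y_{0i}-Y_{1j}\|^2$, each with $0i<1j$ in $\CC_2$, and the two terms $\|Y_{i0}-Y_{i1}\|^2$, also comparable pairs. Integrating gives $\bE\|Z_0-Z_1\|^2=\frac14(4q-2q)=q/2$. Then \cref{lem:final}(2) gives $q\le L^2q/2$, and since $L<\sqrt2$ this forces $q=0$. Next, \eqref{eq:y0z1} bounds $\|Y_1-Z_0\|^2$ by $2\|Y_1-Y_{00}\|^2+\frac12\|Y_{00}-Y_{01}\|^2$. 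This is exactly where (ii) is needed, and it yields $\bE\|Y_1-Z_0\|^2\le 2q+q/2=0$. Therefore $\bE\|Y_1-Z_1\|^2\le 2\bE\|Y_1-Z_0\|^2+2\bE\|Z_0-Z_1\|^2=0$. On the other hand, $\|Y_1-Z_1\|\ge\delta$ on the full-measure set $\Omega^*$ by \cref{lem:final}(3), so $\bE\|Y_1-Z_1\|^2\ge\delta^2>0$, a contradiction.

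I expect no real obstacle; the main point is bookkeeping. We must check that every pair whose expected squared distance is used is either a comparable pair in $\CC_2$ or the pair $(00,1)$, so that only the generators $h_{uv}$ and $h_0$ are needed. We must also check that \cref{lem:F-invariant}(1) and the well-definedness of $c$ use nothing beyond the bijectivity of the $s_i$ and the disjointness of $A_0$ and $A_1$.
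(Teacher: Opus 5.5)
Your proposal is correct and is exactly the straightforward analogue of the proof of Theorem~\ref{thm:main} that the paper has in mind; the paper omits this proof, and you carry it out. You transport the recursion for $c$, the full-measure set $\Omega^*$, the identity $(gP)_b=P_a$ and the integrated estimates \eqref{eq:z0z1} and \eqref{eq:y0z1}, and you check that only the six comparable pairs of $\CC_2$ (via the $h_{uv}$) and the pair $(00,1)$ (via $h_0$) are needed.
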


\bibliographystyle{plain}
\bibliography{ref}
\end{document}